\documentclass[preliminary,copyright,creativecommons]{eptcs}
\usepackage{breakurl}             

\usepackage{amssymb,amsthm}

\title{No-go theorems for functorial localic spectra\\ of noncommutative rings}
\author{Benno van den Berg\thanks{Supported by Netherlands Organisation for Scientific Research (NWO).}
\institute{Mathematisch Instituut \\ Utrecht University} \email{B.vandenBerg1@uu.nl}
\and
Chris Heunen\footnotemark[1]
\institute{Department of Computer Science \\ University of Oxford}
\email{heunen@cs.ox.ac.uk}}

\newtheorem{theorem}{Theorem}
\newtheorem{lemma}[theorem]{Lemma}
\newtheorem{proposition}[theorem]{Proposition}
\newtheorem{corollary}[theorem]{Corollary}

\newcommand{\cC}{\ensuremath{\mathcal{C}}}
\newcommand{\Cat}[1]{\ensuremath{\mathbf{#1}}}
\newcommand{\op}{\ensuremath{^{\mathrm{op}}}}
\newcommand{\Mn}[2][n]{\ensuremath{\mathbb{M}_{#1}(#2)}}
\newcommand{\MnC}[1][n]{\Mn[#1]{\mathbb{C}}}
\newcommand{\Cstar}{\Cat{Cstar}}
\newcommand{\Loc}{\Cat{Loc}}
\newcommand{\Top}{\Cat{Top}}

\newcommand{\Idl}{\ensuremath{\mathrm{Idl}}}
\newcommand{\Spec}{\ensuremath{\mathrm{Spec}}}

\begin{document}
\maketitle

\begin{abstract}
  \noindent
  Any functor from the category of C*-algebras to the category of
  locales that assigns to each commutative C*-algebra its Gelfand
  spectrum must be trivial on algebras of $n$-by-$n$ matrices for $n
  \geq 3$. The same obstruction applies to the Zariski, Stone, and
  Pierce spectra. The possibility of spectra in categories other than
  that of locales is briefly discussed.
\end{abstract}

\noindent
A recent article~\cite{reyes:onextensions} by Reyes shows that any
functor $\Cat{CStar}\op \to \Top$ that assigns to each commutative
C*-algebra its Gelfand spectrum must be trivial on the matrix algebras
$\MnC$ for $n \geq 3$. More precisely, it shows that any functorial
extension of the Gelfand spectrum to noncommutative C*-algebras must
yield the empty space on the noncommutative C*-algebras $\MnC$ for $n
\geq 3$. This result shows in a strong way why the traditional notion
of topological space is inadequate to host a good notion of spectrum
for such noncommutative C*-algebras. 

What remains open is whether less orthodox notions of space are also
susceptible to Reyes' theorem. In particular, there are notions of
space, such as that of a locale or a topos, in which the notion of
point plays a subordinate role. Indeed, one of the messages of locale
theory and topos theory is that one can have spaces with a rich
topological structure, but without any points. Initially, this might
seem like an attractive way of circumventing Reyes' result: perhaps
there can be a functor $\Cat{CStar}\op \to \Loc$, extending the Gelfand
spectrum for commutative C*-algebras and assigning nontrivial locales
to noncommutative C*-algebras such as $\MnC$ for $n \geq 3$. 

Unfortunately, the main result of this paper says that this is not the
case: by sharpening Reyes' argument, one can show that any functor
$\Cat{CStar}\op \to \Loc$ which assigns to a commutative C*-algebra its Gelfand
locale must yield the trivial locale (\textit{i.e.}~the locale in which top
and bottom element coincide) on the matrix algebras $\MnC$ for $n \geq
3$. We obtain similar ``no-go theorems'' for (ringed) toposes and even
for quantales (certain structures which have been put forward as a
noncommutative generalization of the notion of locale). Additionally,
we prove similar limitative results for Zariski, Stone and Peirce spectra. 

This does not mean that it is hopeless to look for a good notion of
spectrum of a noncommutative C*-algebra or ring; but what these
result do say is that considerable creativity will be required, in
particular in finding the right generalisation of the notion of
space. We end the paper by mentioning some positive results in
this direction.  

\section{Locale-theoretic preliminaries}

Locales and topological spaces are closely related apart from a few subtle differences. One of
the most important is that in these categories limits are, in general,
computed differently. Initially one might hope that for this reason
Reyes' result does not apply to locales, but it turns out that it does.
The key observation is that, although limits in spaces and 
locales differ in general, they coincide for those spaces (locales)
that arise as spectra. 

\begin{proposition}\label{prop:krlocclosedunderlimits}
  Both compact regular and compact completely regular locales are closed under limits in $\Loc$.
\end{proposition}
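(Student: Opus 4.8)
The plan is to reduce to products and equalizers, using standard stability properties of the relevant classes of locales. Since $\Loc$ is complete and the two subcategories in question are full, it suffices to show that each is closed under arbitrary products and under equalizers as these are formed in $\Loc$; every limit is then built from products and equalizers and so will automatically land in the subcategory.

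For products I would appeal to two classical facts of localic topology. First, the localic Tychonoff theorem: an arbitrary product of compact locales, formed in $\Loc$ (equivalently, the frame coproduct of the corresponding frames), is again compact. The point worth stressing is that this holds constructively, without the axiom of choice, which is precisely why the argument survives the passage from spaces to locales. Second, both regularity and complete regularity are productive: a product of (completely) regular locales is again (completely) regular. Combining these, a product of compact (completely) regular locales is compact (completely) regular.

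For equalizers the key input is that every compact regular locale is Hausdorff, in the sense that its diagonal $M \to M \times M$ is a closed sublocale. Given two maps $f,g : L \to M$ between compact regular locales, their equalizer in $\Loc$ is the pullback of the diagonal of $M$ along $(f,g) : L \to M \times M$; since the diagonal is closed and closed sublocales are stable under pullback, the equalizer is a closed sublocale of $L$. Two further heredity properties then finish the argument: a closed sublocale of a compact locale is compact, and every sublocale of a (completely) regular locale is again (completely) regular. Hence the equalizer is compact and (completely) regular. The completely regular case goes through verbatim, since a compact completely regular locale is in particular compact regular and therefore Hausdorff.

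The main obstacle is the equalizer step, and specifically the identification of the equalizer with a closed sublocale: this is exactly where the Hausdorff property of compact regular locales is essential, and it is the localic analogue of the classical fact that the equalizer of two maps into a Hausdorff space is closed. Everything else, namely the localic Tychonoff theorem together with the productivity and heredity of (complete) regularity, is standard, though one must take care to invoke these in their choice-free localic forms rather than their spatial counterparts.
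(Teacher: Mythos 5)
Your proof is correct and follows essentially the same route as the paper: reduce closure under limits to closure under products and equalizers, cite productivity of compactness and (complete) regularity for the former, and identify the equalizer as a closed sublocale for the latter, finishing with heredity of compactness and (complete) regularity. The only difference is cosmetic: where the paper directly cites Johnstone's lemma that equalizers of maps into a regular locale are closed, you unpack its proof via the closed diagonal and pullback-stability of closed sublocales.
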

\begin{proof}
  The product of compact (completely) regular locales is again (completely) regular and
  compact~\cite[III.1.6, III.1.7, IV.1.5]{johnstone:stonespaces}. The equalizer of $f, g \colon A \to B$ is 
  a closed sublocale of $A$ whenever $B$ is (completely)
  regular~\cite[III.1.3]{johnstone:stonespaces} and a 
  closed sublocale of a compact (completely) regular locale is again
  (completely) regular and
  compact~\cite[III.1.2, IV.1.5]{johnstone:stonespaces}.
\end{proof}

\begin{proposition}
  The limit in $\Loc$ of a diagram of coherent locales and coherent morphisms is 
  coherent. In addition, the mediating morphisms are coherent.
\end{proposition}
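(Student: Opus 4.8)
The plan is to exploit the classical duality between coherent locales and distributive lattices, reducing the assertion about limits in $\Loc$ to one about colimits of distributive lattices, which are straightforward to compute. Recall that a frame $F$ is coherent exactly when its compact elements form a sublattice $K(F)$ (closed under finite meets and joins and containing the bottom and top) that generates $F$ under arbitrary joins, and that a frame homomorphism is coherent precisely when it restricts to a lattice homomorphism between these lattices of compact elements. The assignment $F \mapsto K(F)$ together with the ideal functor $\Idl$ then sets up an equivalence $\Cat{CohFrm} \simeq \Cat{DLat}$ between coherent frames with coherent morphisms and distributive lattices; passing to opposite categories, coherent locales with coherent morphisms are equivalent to $\Cat{DLat}\op$.

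The key observation I would isolate first is that $\Idl \colon \Cat{DLat} \to \Cat{Frm}$ is \emph{left adjoint} to the forgetful functor $U \colon \Cat{Frm} \to \Cat{DLat}$ that regards a frame as a mere distributive lattice. Indeed, a frame homomorphism $\Idl(D) \to F$ is determined by its values on principal ideals, and sending $d$ to the principal ideal $\mathord{\downarrow} d$ identifies lattice homomorphisms $D \to U F$ with frame homomorphisms $\Idl(D) \to F$, the ideal $I$ being sent to $\bigvee_{d \in I} f(d)$. Being a left adjoint, $\Idl$ preserves all colimits.

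I would then translate the problem across the duality. A limit in $\Loc$ is a colimit in $\Cat{Frm}$, so it suffices to show that the colimit in $\Cat{Frm}$ of a diagram of coherent frames and coherent morphisms is again coherent, with coherent coprojections and mediating morphisms. Because every coherent morphism $\Idl(D_i) \to \Idl(D_j)$ is $\Idl$ of a unique lattice homomorphism $D_i \to D_j$, any such diagram is the image under $\Idl$ of a diagram in $\Cat{DLat}$. As $\Cat{DLat}$ is a variety of algebras it is cocomplete, so this diagram has a colimit $D = \mathrm{colim}_i D_i$; applying the colimit-preserving functor $\Idl$ exhibits $\Idl(D)$, with coprojections $\Idl(\iota_i)$, as the colimit in $\Cat{Frm}$. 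This object is coherent since it has the form $\Idl(D)$, and its coprojections are coherent since they are $\Idl$ of lattice homomorphisms.

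It remains to verify the mediating morphisms, and this is the step I would treat as the main obstacle, since $\Cat{CohFrm}$ is not a full subcategory of $\Cat{Frm}$ and so coherence of induced maps cannot be taken for granted. Given a cocone $(G, g_i)$ of coherent frames over the diagram, one has $G \cong \Idl(K(G))$ and each $g_i$ corresponds to a lattice homomorphism $D_i \to K(G)$; these induce a unique map $\bar g \colon D \to K(G)$ in $\Cat{DLat}$, and $\Idl(\bar g)$ is a cocone morphism $\Idl(D) \to G$. By the uniqueness clause of the universal property in $\Cat{Frm}$, the mediating frame homomorphism must equal $\Idl(\bar g)$ and is therefore coherent. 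Dualizing the entire argument back to $\Loc$ yields that the limit of a diagram of coherent locales and coherent morphisms is a coherent locale, and that both its projections and the mediating morphisms are coherent.
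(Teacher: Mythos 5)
Your proof is correct and takes essentially the same approach as the paper: the paper's one-sentence proof rests on exactly the fact you isolate, namely that coherent locales and coherent morphisms are precisely the image of the faithful functor $\Idl \colon \Cat{DLat} \to \Cat{Frm}$, which is left adjoint to the forgetful functor and hence preserves colimits, i.e.\ limits in $\Loc$. The details you supply --- lifting the diagram to $\Cat{DLat}$, invoking cocompleteness of $\Cat{DLat}$, and deducing coherence of the mediating morphism from the uniqueness clause of the universal property --- are just the unpacking of what the paper leaves implicit.
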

\begin{proof}
  A (morphism of) locale(s) is coherent if it lies in the image of the functor
  $\Idl \colon \Cat{DLat} \to \Cat{Frm}$, which is faithful and left adjoint to
  the forgetful functor \cite[II.2.11]{johnstone:stonespaces}.
\end{proof}

\begin{corollary}
  Stone locales are closed under limits in $\Loc$.
\end{corollary}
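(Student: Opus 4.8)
The plan is to realize a Stone locale as precisely a locale that is at once coherent and regular, and then to let the preceding two propositions do the work. Under the spectral correspondence a coherent locale is the ideal completion of a distributive lattice, which is automatically compact; such a locale is Stone exactly when that lattice is Boolean, equivalently when the coherent locale is regular. I therefore take as working characterization that the Stone locales are precisely the coherent locales that are in addition compact regular, and recall the accompanying fact that in a Stone locale the compact elements are exactly the complemented elements \cite[II.3]{johnstone:stonespaces}.

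Let $D$ be a diagram of Stone locales, and let $L$ be its limit in $\Loc$. Regarding $D$ as a diagram of compact regular locales, Proposition~\ref{prop:krlocclosedunderlimits} already gives that $L$ is compact regular. It remains only to show that $L$ is also coherent, which I would obtain from the proposition on coherent limits. The subtlety is that that proposition demands coherent morphisms, whereas $D$ carries a priori arbitrary locale morphisms; supplying this is the heart of the argument.

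The step I expect to be the main obstacle is thus the verification that every locale morphism $f$ between Stone locales is automatically coherent, \emph{i.e.}\ that its inverse-image frame homomorphism $f^*$ preserves compact elements. Here I would argue as follows. If $a$ is compact in the domain, then by the characterization above $a$ is complemented, say $a \vee b = 1$ and $a \wedge b = 0$. Applying the frame homomorphism $f^*$ preserves these relations, giving $f^*(a) \vee f^*(b) = 1$ and $f^*(a) \wedge f^*(b) = 0$, so $f^*(a)$ is complemented in the codomain. Since the codomain is again Stone, complemented elements are compact, whence $f^*(a)$ is compact. Thus $f^*$ preserves compact elements and $f$ is coherent.

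With this in hand the corollary closes at once. The diagram $D$ is a diagram of coherent locales and coherent morphisms, so by the coherent-limit proposition its limit $L$ in $\Loc$ is coherent; combined with the compact regularity already established, $L$ is coherent and regular, hence Stone. Because both propositions compute the limit inside the single ambient category $\Loc$, these two conclusions concern the very same object, and no reconciliation of competing limit constructions is required.
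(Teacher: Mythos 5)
Your proof is correct and follows essentially the same route as the paper: characterize Stone locales as compact regular plus coherent, apply the two preceding propositions, and use the fact that every locale morphism between Stone locales is automatically coherent. The only difference is that you spell out the proof of that last fact (compact $=$ complemented in a Stone locale, and frame homomorphisms preserve complements), which the paper merely asserts in one sentence.
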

\begin{proof}
  A locale is Stone when it is both compact regular and coherent. Any continuous morphism
  between Stone spaces is coherent.
\end{proof}

\begin{corollary}
  The functors $\Spec \colon \Cat{cCstar}\op \to \Loc$ and
  $\Idl \colon \Cat{Bool}\op \to \Loc$ which send commutative
  C*-algebras to their Gelfand spectra and  boolean algebras to their
 Stone spectra preserve all limits. 
\end{corollary}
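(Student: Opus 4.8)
The plan is to recognize both functors as equivalences onto full subcategories of $\Loc$ that are closed under limits, and then to exploit the fact that such inclusions create limits. Concretely, I would first invoke Gelfand duality in its localic form (due to Banaschewski and Mulvey): the functor $\Spec$ restricts to an equivalence between $\Cat{cCstar}\op$ and the full subcategory of $\Loc$ whose objects are the compact completely regular locales. Dually, Stone duality presents $\Idl$ as an equivalence between $\Cat{Bool}\op$ and the full subcategory of Stone locales. Since any equivalence of categories preserves all limits, each functor preserves limits when regarded as landing in its respective subcategory.

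The second ingredient is that these target subcategories are closed under limits in $\Loc$. For $\Spec$ this is exactly Proposition~\ref{prop:krlocclosedunderlimits} (a compact regular locale is automatically completely regular, so the two descriptions agree); for $\Idl$ it is the corollary that Stone locales are closed under limits in $\Loc$. A full subcategory that is closed under limits has the property that its inclusion into the ambient category creates limits: a limiting cone computed inside the subcategory remains limiting in $\Loc$. Hence the inclusions of $\mathrm{KRegLoc}$ and of the category of Stone locales into $\Loc$ both preserve limits.

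Combining these, I would write $\Spec \colon \Cat{cCstar}\op \to \Loc$ as the composite of the equivalence onto the subcategory of compact regular locales followed by the limit-preserving inclusion, and conclude that $\Spec$ preserves all limits; the argument for $\Idl$ is word-for-word the same with Stone duality and the Stone corollary substituted in. The only genuinely delicate point --- and the reason the earlier closure results are indispensable --- is the verification that the limit manufactured by the equivalence inside the subcategory is not merely a limit there but also a limit in all of $\Loc$; without closure under limits this could fail, and the corollary would be false. Everything else is a routine observation about equivalences and full subcategories.
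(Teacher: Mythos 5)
Your proposal is correct and is essentially the paper's own proof: you factor $\Spec$ (resp.\ $\Idl$) through localic Gelfand (resp.\ Stone) duality as an equivalence onto the full subcategory of compact completely regular (resp.\ Stone) locales, and then use closure of that subcategory under limits in $\Loc$ (Proposition~\ref{prop:krlocclosedunderlimits} and the Stone corollary) to conclude that the inclusion, and hence the composite, preserves all limits. The only difference is that you spell out the routine categorical bookkeeping (equivalences preserve limits; a full subcategory closed under limits has a limit-preserving inclusion) that the paper leaves implicit.
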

\begin{proof}
  The functor $\mathrm{Spec}$ is part of a duality between commutative
  C*-algebras and compact completely regular locales, and therefore
  certainly preserves all limits as a functor to the category of
  such locales. But as these locales are closed under limits, it also
  preserves all limits when regarded as a functor to the category
  of all locales. The Stone case is completely analogous.
\end{proof}

\section{Main results}

A locale is trivial when it is an initial object in $\Loc$,
\textit{i.e.} when it satisfies $0=1$. 
In categories whose objects contain the matrix rings $\MnC$, let us
call an object $R$ \emph{Kochen--Specker} when there is a   
morphism $\MnC \to R$ for some $n \geq 3$. 
Kochen--Specker objects in the category $\Cat{Ring}$ of rings are
those rings of the form $\Mn{S}$ that allow a ring
homomorphism $\mathbb{C} \to S$~\cite[17.7]{lam:modules}. The class of
Kochen--Specker objects always includes at least $\MnC$ for $n \geq 3$ themselves.
For our purposes it can often be widened; for example, by~\cite{doering:kochenspecker},
Theorem~\ref{thm:vonneumann} below still holds when we include in the class of
Kochen--Specker algebras the von
Neumann algebras without an $\MnC[2]$ factor. 

\begin{lemma}
  If a functor $F \colon \Cat{Ring}\op \to \Loc$ is trivial on $\MnC$
  for all $n \geq 3$, then it is trivial on all Kochen--Specker rings.
\end{lemma}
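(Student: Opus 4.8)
The plan is to combine functoriality of $F$ with a special feature of the trivial locale, namely that it is a \emph{strict} initial object of $\Loc$: any locale admitting a morphism into it is itself trivial. Once this is available the lemma is almost immediate, so the substance lies in establishing the strictness and in keeping careful track of the directions of the morphisms produced by the contravariant functor $F$.

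First I would unwind the hypothesis that $R$ is Kochen--Specker. By definition this provides a ring homomorphism $g \colon \MnC \to R$ for some $n \geq 3$. Read as a morphism of $\Cat{Ring}\op$, this $g$ points from $R$ to $\MnC$, so applying $F$ gives a locale morphism $F(g) \colon F(R) \to F(\MnC)$. By assumption $F(\MnC)$ is the trivial locale, i.e.\ the initial object of $\Loc$.

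It then remains to deduce that $F(R)$ is trivial from the mere existence of a morphism $F(R) \to F(\MnC)$ whose target is trivial, and this is the point at which I would establish strictness, arguing on the frame side. A locale morphism $X \to Y$ is a frame homomorphism $\mathcal{O}(Y) \to \mathcal{O}(X)$, and the trivial locale corresponds to the degenerate frame in which $0 = 1$. Since every frame homomorphism preserves the top and bottom elements (the empty meet and the empty join), a homomorphism $h$ out of a degenerate frame satisfies $0 = h(0) = h(1) = 1$ in its codomain, forcing that codomain to be degenerate as well. Applied to $F(g)$ this shows $F(R)$ is trivial.

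The only step that is more than bookkeeping is the strictness of the initial locale; the rest is the definition of a Kochen--Specker ring together with the contravariance of $F$. The place where care is genuinely needed is in recording the directions of the arrows, since it is easy to apply the triviality hypothesis to the wrong object after passing to the opposite category.
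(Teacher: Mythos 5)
Your proof is correct and takes essentially the same route as the paper's: apply the contravariant $F$ to the ring homomorphism $\MnC \to R$ to get a locale morphism $FR \to F\MnC$ into the trivial locale, and conclude that $FR$ is trivial. The only difference is that you explicitly justify the strictness of the initial object of $\Loc$ via frame homomorphisms out of the degenerate frame, a point the paper leaves implicit in the words ``and so $FR$ must be trivial.''
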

\begin{proof}
  If $f \colon \MnC \to R$ is a ring morphism, then $Ff \colon FR \to
  F\MnC$ is a locale morphism to the trivial locale, and so $FR$ must be
  trivial. 
\end{proof}

\begin{theorem}\label{thm:cstar}
  Any functor $\Cstar\op \to \Loc$ that assigns to each commutative
  C*-algebra its Gelfand spectrum must be trivial on all
  Kochen--Specker C*-algebras. 
\end{theorem}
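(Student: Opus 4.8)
The plan is to reduce everything to the Kochen–Specker-style obstruction for $\MnC$ itself and then invoke the preceding lemma. Concretely, the first step is to show that the hypothesised functor $F \colon \Cstar\op \to \Loc$ must be trivial on $\MnC$ for each $n \geq 3$; once that is established, the lemma (applied to the restriction of $F$ along $\Cat{Ring}\op \to \Cstar\op$, or by reproving its one-line argument directly for C*-algebras) immediately extends triviality to all Kochen–Specker C*-algebras via the morphism $\MnC \to R$.

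To prove triviality on $\MnC$, I would exploit that $F$ agrees with the Gelfand spectrum $\Spec$ on commutative C*-algebras, together with the fact (established in the corollary above) that $\Spec$ preserves all limits into $\Loc$. The idea is to present the locale $F\MnC$ as a limit, computed in $\Cstar\op$ (i.e.\ a colimit of C*-algebras), of commutative subalgebras for which we know the spectrum exactly. Following Reyes' strategy, one considers the poset of commutative C*-subalgebras of $\MnC$, each of the form generated by a self-adjoint element with finite spectrum, i.e.\ by a finite family of orthogonal projections summing to $1$. Since $F$ is a functor and sends each commutative subalgebra to its Gelfand spectrum, $F\MnC$ comes equipped with compatible locale morphisms to the (finite, discrete) Gelfand spectra of all these maximal abelian subalgebras. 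A point (or, more robustly for locales, a suitable element of the frame) of $F\MnC$ would then amount to a choice, coherent across all these subalgebras, of an atom in each spectrum—precisely a two-valued assignment on the projection lattice that is additive on orthogonal families and multiplicative on commuting projections.

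The crux is therefore the Kochen–Specker theorem: for $\MnC$ with $n \geq 3$ there is \emph{no} such globally consistent valuation, because the projection lattice of $\MnC$ admits a finite set of projections that cannot be consistently two-coloured. In the localic setting the point to be careful about is that the absence of global points is by itself not enough—locales can be pointless yet nontrivial—so I would phrase the obstruction at the level of the frame rather than the points. That is, I would argue that the compatibility conditions forced by functoriality, combined with the explicit description of the finite Gelfand locales and the way their coproducts/pullbacks behave, force the frame of $F\MnC$ to collapse: the empty cover arising from an uncolourable Kochen–Specker configuration translates into a relation $0 = 1$ in the frame. The main obstacle I anticipate is exactly this passage from "no consistent valuation" to "the top and bottom of the frame coincide": one must show that the Kochen–Specker contradiction manifests not merely as a lack of points but as an actual identification $0=1$ in $\mathcal{O}(F\MnC)$, which is where the limit-preservation of $\Spec$ and the closure-under-limits results from Section~1 do the essential work, guaranteeing that the relevant colimit diagram of commutative algebras is sent by $F$ to the correct localic limit and that no room is left for a nontrivial but pointless locale to survive.
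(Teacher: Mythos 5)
Your overall plan (prove triviality on $\MnC$ for $n \geq 3$, then propagate along morphisms $\MnC \to R$) and your list of ingredients (the diagram of commutative C*-subalgebras, limit-preservation of $\Spec$, Kochen--Specker) match the paper's, but the central step is misstated in a way that leaves a real gap. You propose to ``present the locale $F\MnC$ as a limit, computed in $\Cstar\op$, of commutative subalgebras.'' This is false: $\MnC$ is not the colimit of its commutative subalgebras in any relevant category --- indeed, by the Kochen--Specker theorem the colimit of this diagram in $\Cat{cCstar}$ is the \emph{zero} algebra, about as far from $\MnC$ as possible --- and nothing in the hypotheses forces an arbitrary functor $F$ to turn colimits of algebras into limits of locales. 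Your later claim that limit-preservation ``guarantees that the relevant colimit diagram of commutative algebras is sent by $F$ to the correct localic limit'' repeats the confusion: the hypothesis is only that $F$ agrees with $\Spec$ objectwise on commutative algebras; it is $\Spec$, not $F$, that preserves limits. What functoriality of $F$ actually gives is weaker but sufficient: a \emph{cone} from $F\MnC$ over the diagram of spectra $\Spec(C)$, $C \in \cC(\MnC)$, hence a mediating morphism $F\MnC \to \lim_C \Spec(C)$. (The paper packages this by defining $G(A) = \lim_{C \in \cC(A)} \Spec(C)$ and observing that $G$ is the terminal functor extending the Gelfand spectrum, so that any $F$ admits a natural map $F \Rightarrow G$.)

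With that correction, the ``main obstacle'' you flag --- passing from ``no consistent valuation'' to ``$0=1$ in the frame'' --- dissolves, but not by the route you sketch; there is no need to chase empty covers through the frame of $F\MnC$. The resolution has two parts you did not supply. First, the Kochen--Specker theorem enters in algebraic form: the colimit of $\cC(\MnC)$ in $\Cat{cCstar}$ is the zero C*-algebra (a compatible family of characters would be a Kochen--Specker valuation, and a nonzero commutative C*-algebra always has a character), so limit-preservation of $\Spec$ gives $\lim_C \Spec(C) = \Spec(0)$, the trivial locale. Triviality is thus established for the \emph{limit}, at the level of frames and with no mention of points. Second, the trivial locale is \emph{strictly} initial in $\Loc$: a locale morphism $X \to T$ with $\mathcal{O}(T)$ the one-element frame is a frame map $\mathcal{O}(T) \to \mathcal{O}(X)$ sending the single element $0_T = 1_T$ to both $0_X$ and $1_X$, forcing $0_X = 1_X$. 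Applying this to the mediating morphism $F\MnC \to \lim_C \Spec(C)$ kills $F\MnC$, and the same strictness argument is exactly what makes your final reduction step (the paper's lemma) work for arbitrary Kochen--Specker algebras. Without these two steps your proposal records the correct intuition but does not close the loop from the nonexistence of valuations to the collapse of $\mathcal{O}(F\MnC)$.
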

\begin{proof}
  For any C*-algebra $A$, let $\cC(A)$ be the diagram of commutative
  C*-subalgebras under inclusion. Define $G(A)$ to be the limit in $\Loc$
  of $\Spec(C)$ with $C \in \cC(X)$. 
  \begin{enumerate}
    \item $G$ is a functor $\Cstar\op \to \Loc$ that assigns to each
      commutative C*-algebra its Gelfand spectrum.
    \item It is the terminal such functor.
    \item Since $\Spec$ preserves limits, $G(A)$ can equally well be computed by first taking the
      colimit of $\cC(A)$ in $\Cat{cCstar}$ and then its Gelfand spectrum.
    \item But for $A = \MnC$ with $n \geq 3$, the colimit of $\cC(A)$ in
      $\Cat{cCstar}$ yields the 0-dimensional C*-algebra; this is the Kochen--Specker
      theorem~\cite{kochenspecker:hiddenvariables}. Hence on these C*-algebras $G$
      yields the trivial locale.
    \item If $F \colon \Cstar\op \to \Loc$ is any other functor that
      assigns to each C*-algebra its Gelfand 
      spectrum, then finality of $G$ guarantees maps $FA \to GA$ for all C*-algebras
      $A$. Hence $FA$ is trivial if $A = \MnC$ for $n \geq 3$.
    \end{enumerate}
    Combining the above observations and the previous lemma yields the
    statement of the theorem. 
\end{proof}

In a similar vein one proves the following
three variations: for Gelfand spectra in the category $\Cat{Neumann}$ of von Neumann
algebras and normal *-homomorphisms; for Stone spectra in the category
$\Cat{PBoolean}$ of partial boolean algebras and partial homomorphisms
(see~\cite{kochenspecker:hiddenvariables,vdbergheunen:colim}); and for
Stone spectra in the category $\Cat{OML}$ of orthomodular lattices
and their homomorphisms. Denote the functor $\Cat{Cstar} \to
\Cat{PBoolean}$ taking projections by $\mathrm{Proj}$.  

\begin{theorem}\label{thm:vonneumann}
  Any functor $\Cat{Neumann}\op \to \Loc$ that assigns to each von Neumann algebra its
  Gelfand spectrum must be trivial on all Kochen--Specker von Neumann algebras.
  \qed
\end{theorem}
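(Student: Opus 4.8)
The plan is to transcribe the proof of Theorem~\ref{thm:cstar} into the von Neumann setting, replacing commutative C*-subalgebras by commutative von Neumann subalgebras and ordinary *-homomorphisms by normal ones. First I would build the terminal such functor by hand: for a von Neumann algebra $A$, let $\cC(A)$ be the poset of its commutative von Neumann subalgebras under inclusion, and define $G(A)$ to be the limit in $\Loc$ of $\Spec(C)$ over $C \in \cC(A)$. The same observations as before should go through: $G$ is a functor $\Cat{Neumann}\op \to \Loc$, it restricts to the Gelfand spectrum on commutative von Neumann algebras, and it is terminal, since any competing functor $F$ furnishes a compatible family of maps $FA \to FC = \Spec(C)$ indexed by the inclusions $C \hookrightarrow A$, and hence a canonical comparison $FA \to GA$.

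The second step is to re-express $G(A)$ as a single spectrum. The Gelfand spectrum of a commutative von Neumann algebra is hyperstonean, in particular a compact regular locale, so by Proposition~\ref{prop:krlocclosedunderlimits} these spectra are closed under limits in $\Loc$; moreover $\Spec$, being part of the Gelfand duality, carries colimits of commutative von Neumann algebras to limits of their spectra. Exactly as in the commutative C*-case this lets me rewrite $G(A)$ as the Gelfand spectrum of the colimit of $\cC(A)$ computed in the category of commutative von Neumann algebras.

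It then remains to identify this colimit on a Kochen--Specker algebra. For $\MnC$ with $n \geq 3$ the colimit of the commutative von Neumann subalgebras collapses to the zero algebra; this is the Kochen--Specker theorem read off the projection lattice, and Döring's strengthening~\cite{doering:kochenspecker} is what allows the Kochen--Specker class to be widened to all von Neumann algebras with no $\MnC[2]$ summand. Since $\Spec$ of the zero algebra is the trivial locale, $G$ is trivial on every Kochen--Specker von Neumann algebra, and terminality propagates this through the comparison maps $FA \to GA$ to every functor $F$ of the stated form.

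I expect the categorical scaffolding to be routine, being essentially the argument of Theorem~\ref{thm:cstar}; the real work is concentrated in the colimit computation. The hard part will be to check that the normality constraint on the morphisms does not disturb the relevant limit in $\Loc$, and that the projection-lattice form of Kochen--Specker genuinely forces the commutative-von-Neumann colimit down to zero rather than merely to a proper but nonzero quotient.
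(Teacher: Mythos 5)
Your overall architecture is exactly the one the paper intends: Theorem~\ref{thm:vonneumann} is proved ``in a similar vein'' to Theorem~\ref{thm:cstar}, by forming the terminal functor $G(A) = \lim_{C \in \cC(A)} \Spec(C)$ over commutative von Neumann subalgebras and collapsing it on $\MnC$. But the justification of your middle step has a genuine flaw. You argue that $\Spec$ of commutative von Neumann algebras turns colimits into limits \emph{in $\Loc$} by combining ``being part of the Gelfand duality'' with Proposition~\ref{prop:krlocclosedunderlimits}. That is how the corollary on limit preservation works in the C*-case, but there the duality lands in compact completely regular locales, a \emph{full} subcategory of $\Loc$ closed under limits. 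The von Neumann duality instead lands in hyperstonean locales with only those morphisms dual to \emph{normal} $*$-homomorphisms; this subcategory is not full (for instance, $L^\infty[0,1]$ admits no normal characters, yet its spectrum has plenty of points, i.e.\ locale maps out of the terminal locale $\Spec(\mathbb{C})$), and its objects are not closed under limits in $\Loc$ (a closed sublocale of an extremally disconnected compact regular locale need not be extremally disconnected). So you cannot identify the $\Loc$-limit of the spectra with the spectrum of the colimit in commutative von Neumann algebras, and your final paragraph invokes precisely this identification for arbitrary, infinite-dimensional Kochen--Specker algebras, where it is unjustified.

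The repair is to concentrate the computation where it is safe and transfer the rest, as the paper does. For $A = \MnC$ every $*$-subalgebra is automatically weakly closed and every $*$-homomorphism out of it is automatically normal (finite-dimensionality), so the diagram $\cC(\MnC)$ of commutative von Neumann subalgebras is \emph{literally} the diagram of commutative C*-subalgebras, and its limit of spectra in $\Loc$ is trivial by the very computation in Theorem~\ref{thm:cstar}: the colimit in $\Cat{cCstar}$ is the zero algebra by Kochen--Specker, and the C*-version of $\Spec$ preserves this limit. This also dissolves the worry you flag about normality disturbing the limit. Then, rather than computing colimits over infinite-dimensional diagrams, transfer triviality to all Kochen--Specker von Neumann algebras as in the lemma preceding Theorem~\ref{thm:cstar}: a morphism $\MnC \to A$ in $\Cat{Neumann}$ induces a locale map $FA \to F\MnC$ into the trivial (initial) locale, which forces $FA$ to be trivial. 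D\"oring's theorem is needed only for the paper's further remark that the Kochen--Specker class may be widened to von Neumann algebras without an $\MnC[2]$ factor, not for the theorem as stated.
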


\begin{theorem}\label{thm:partialbool}
  Any functor $F \colon \Cat{PBoolean}\op \to \Loc$ that assigns to
  each boolean algebra its Stone spectrum must be trivial on
  $\mathrm{Proj}(\MnC)$ for $n \geq 3$.  
  \qed
\end{theorem}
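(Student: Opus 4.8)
The plan is to transport the proof of Theorem~\ref{thm:cstar} almost verbatim, replacing commutative C*-subalgebras by boolean subalgebras and the Gelfand functor $\Spec$ by the Stone functor $\Idl$. For a partial boolean algebra $B$, let $\cC(B)$ be the diagram of its (total) boolean subalgebras ordered by inclusion, and define $G(B)$ to be the limit in $\Loc$ of the Stone spectra $\Idl(C)$ with $C \in \cC(B)$. First I would check the analogues of items~(1) and~(2) from the proof of Theorem~\ref{thm:cstar}: that $G$ is a functor $\Cat{PBoolean}\op \to \Loc$ restricting to the Stone spectrum on honest boolean algebras (for which $\cC(B)$ has a terminal object, namely $B$ itself), and that $G$ is the terminal such functor. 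The latter is the routine cone argument: for any competitor $F$ and any $B$, the images under the contravariant $F$ of the inclusions $C \hookrightarrow B$ form a cone over $\{\Idl(C)\}_{C \in \cC(B)}$ with apex $FB$, which induces a unique mediating morphism $FB \to GB$.

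Next I would invoke the limit preservation of $\Idl$ established in the corollary above: since $\Idl \colon \Cat{Bool}\op \to \Loc$ sends colimits in $\Cat{Bool}$ to limits in $\Loc$, the locale $G(B)$ can be recomputed as $\Idl(L)$, where $L = \mathrm{colim}_{C \in \cC(B)} C$ is the colimit of the boolean subalgebras, taken in $\Cat{Bool}$. The crux is then to show that for $B = \mathrm{Proj}(\MnC)$ with $n \geq 3$ this colimit $L$ is the trivial one-element boolean algebra. Here I would use that a boolean algebra is trivial exactly when it admits no homomorphism to the two-element boolean algebra $\mathbf{2}$, since every nontrivial boolean algebra carries an ultrafilter. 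By the universal property, a homomorphism $L \to \mathbf{2}$ is the same as a cocone from $\cC(B)$ to $\mathbf{2}$, \textit{i.e.}~a family of boolean homomorphisms $C \to \mathbf{2}$ compatible with the inclusions; and such a family is precisely a global valuation of the projections, that is, a morphism of partial boolean algebras $\mathrm{Proj}(\MnC) \to \mathbf{2}$. The Kochen--Specker theorem~\cite{kochenspecker:hiddenvariables} asserts that no such valuation exists for $n \geq 3$, so $L$ has no homomorphism to $\mathbf{2}$ and is therefore trivial; consequently $G(B) = \Idl(L)$ is the trivial locale.

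Finally I would assemble the conclusion as in Theorem~\ref{thm:cstar}: for $B = \mathrm{Proj}(\MnC)$ the terminal functor $G$ yields the trivial locale, and for any $F$ as in the statement the mediating morphism $FB \to GB$ is a locale morphism into the trivial locale, which (as in the lemma preceding Theorem~\ref{thm:cstar}) forces $FB$ to be trivial. The main obstacle I anticipate is precisely the colimit computation in the second paragraph: everything else is formally identical to the C*-algebraic case, but one must carefully verify the dictionary identifying cocones of $\cC(\mathrm{Proj}(\MnC))$ into $\mathbf{2}$ with Kochen--Specker valuations, and hence identifying the non-triviality of $L$ with the existence of such a valuation.
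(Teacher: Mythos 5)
Your proposal is correct and follows essentially the same route the paper intends: Theorem~\ref{thm:partialbool} is stated as being provable ``in a similar vein'' to Theorem~\ref{thm:cstar}, and your argument is precisely that transport --- the terminal functor $G$ built from limits of Stone spectra of boolean subalgebras, the limit-preservation of $\Idl \colon \Cat{Bool}\op \to \Loc$ from the corollary, triviality of the colimit via Kochen--Specker, and the strictness of the trivial locale to kill any competitor $F$. The only minor deviation is that your identification of the triviality of the colimit $L$ with the nonexistence of a valuation invokes the ultrafilter lemma and is therefore nonconstructive, whereas the paper (citing \cite{vdbergheunen:colim}) has this identification constructively; this does not affect correctness.
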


\begin{theorem}
  Any functor $\Cat{OML}\op \to \Loc$ that assigns to each boolean algebra its Stone
  spectrum must be trivial on $\mathrm{Proj}(\MnC)$ for $n \geq 3$. 
  \qed
\end{theorem}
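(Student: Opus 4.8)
The plan is to run the argument of Theorem~\ref{thm:cstar} once more, now closely parallel to Theorem~\ref{thm:partialbool}, with boolean subalgebras of an orthomodular lattice playing the role of the commutative C*-subalgebras. For an orthomodular lattice $L$, let $\mathcal{B}(L)$ denote the poset of its boolean subalgebras ordered by inclusion, and define $G(L)$ to be the limit in $\Loc$ of the diagram $B \mapsto \Idl(B)$ indexed by $B \in \mathcal{B}(L)$. An orthomodular lattice homomorphism $f \colon L \to M$ sends each boolean subalgebra $B \subseteq L$ to a boolean subalgebra $f(B) \subseteq M$ and restricts to a homomorphism $B \to f(B)$; composing the limit projection $GM \to \Idl(f(B))$ with the induced map $\Idl(f(B)) \to \Idl(B)$ yields a cone over $\mathcal{B}(L)$ and hence a morphism $GM \to GL$, making $G$ a functor $\Cat{OML}\op \to \Loc$. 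When $L$ is itself boolean it is the top element of $\mathcal{B}(L)$, so the limit collapses to $\Idl(L)$; thus $G$ assigns to each boolean algebra its Stone spectrum. First I would check, exactly as in the proof of Theorem~\ref{thm:cstar}, that $G$ is terminal among all such functors: for any $F$ agreeing with $\Idl$ on boolean algebras, the maps $F\iota_B \colon FL \to FB = \Idl(B)$ induced by the inclusions $\iota_B \colon B \hookrightarrow L$ form a cone and factor uniquely through $GL$, giving a natural transformation $F \Rightarrow G$.

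Next I would swap the limit and the spectrum. Since the Stone spectrum functor $\Idl \colon \Cat{Bool}\op \to \Loc$ preserves all limits (by the corollary that $\Idl$ preserves all limits), it carries the colimit of $\mathcal{B}(L)$ in $\Cat{Bool}$ to the limit of $B \mapsto \Idl(B)$ in $\Loc$; that is, $G(L) = \Idl\bigl(\mathrm{colim}_{B \in \mathcal{B}(L)} B\bigr)$ with the colimit taken in $\Cat{Bool}$. This reduces the entire theorem to a single claim: for $L = \mathrm{Proj}(\MnC)$ with $n \geq 3$, the colimit of the boolean subalgebras of $L$ in $\Cat{Bool}$ is the trivial one-element boolean algebra.

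The hard part, and the only genuinely nonformal step, is this colimit computation, which is precisely the Kochen--Specker theorem in lattice-theoretic guise. A homomorphism from the colimit to the two-element boolean algebra $\mathbf{2}$ is the same datum as a family of homomorphisms $h_B \colon B \to \mathbf{2}$, one for each boolean subalgebra $B \subseteq \mathrm{Proj}(\MnC)$, such that $h_{B'}$ restricts to $h_B$ whenever $B \subseteq B'$; equivalently, a two-valued assignment on the projections that is a boolean homomorphism on each commuting context, i.e.\ a Kochen--Specker valuation. For $n \geq 3$ no such valuation exists~\cite{kochenspecker:hiddenvariables}, so the colimit admits no homomorphism to $\mathbf{2}$; since every nontrivial boolean algebra admits one, the colimit must be trivial, whence $G(\mathrm{Proj}(\MnC)) = \Idl(\text{trivial algebra})$ is the trivial locale. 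Finally, terminality of $G$ supplies a locale morphism $F(\mathrm{Proj}(\MnC)) \to G(\mathrm{Proj}(\MnC))$ into the trivial locale, which, exactly as in the opening lemma, forces $F(\mathrm{Proj}(\MnC))$ itself to be trivial.
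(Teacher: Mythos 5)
Your proposal is correct and is essentially the paper's intended argument: the paper proves this theorem only by the remark that it goes ``in a similar vein'' to Theorem~\ref{thm:cstar}, and you have carried out exactly that adaptation --- the terminal functor $G$ built as a limit of Stone spectra over the poset of boolean subalgebras, the reduction via limit-preservation of $\Idl \colon \Cat{Bool}\op \to \Loc$ to a colimit in $\Cat{Bool}$, and the Kochen--Specker theorem to show that colimit is trivial for $\mathrm{Proj}(\MnC)$, $n \geq 3$. One minor remark: your derivation of the trivial colimit (no homomorphism to $\mathbf{2}$ implies trivial) invokes the boolean prime ideal theorem, so this rendering is nonconstructive, whereas the paper notes these variants can be proved constructively; this affects only the metamathematical status, not the correctness, of the proof.
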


The Pierce spectrum of a commutative ring, \textit{i.e.} the Stone spectrum of its boolean
algebra of idempotents, requires the following slightly adapted proof.

\begin{theorem}
  Any functor $\Cat{Ring}\op \to \Loc$ that assigns to each commutative ring its Pierce
  spectrum must be trivial on all Kochen--Specker rings.
\end{theorem}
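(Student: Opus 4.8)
The plan is to follow the proof of Theorem~\ref{thm:cstar} almost verbatim, with the single adaptation that the limit is computed through Stone duality rather than through a (nonexistent) duality at the level of all commutative rings. Write $B(R)$ for the boolean algebra of idempotents of a commutative ring $R$, with operations $e \wedge f = ef$, $e \vee f = e + f - ef$ and $\neg e = 1 - e$; by definition the Pierce spectrum of $R$ is the Stone locale $\Idl(B(R))$, so the functor under consideration factors as the idempotent functor $B \colon \Cat{cRing} \to \Cat{Bool}$ followed by $\Idl$.

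As before, for any ring $A$ let $\cC(A)$ be the diagram of its commutative subrings under inclusion, and put $G(A) = \lim_{C \in \cC(A)} \Idl(B(C))$ in $\Loc$. The same formal reasoning as in Theorem~\ref{thm:cstar} shows that $G$ is a functor $\Cat{Ring}\op \to \Loc$ which restricts to the Pierce spectrum on commutative rings and is terminal among all such functors: any competitor $F$ restricts to $\Idl\circ B$ on commutative subrings, so the images of the inclusions $C \hookrightarrow A$ assemble into a cone $FA \to \Idl(B(C))$ and hence into a mediating morphism $FA \to GA$ for every $A$.

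The adaptation occurs in computing $G(\MnC)$ for $n \geq 3$. Since no Gelfand-style duality is available for rings, one cannot form a colimit in a category of rings and take its spectrum; instead I would invoke the corollary that $\Idl$ preserves all limits. This lets one rewrite $G(A) = \Idl\bigl(\mathrm{colim}_{C} B(C)\bigr)$, the colimit now being formed in $\Cat{Bool}$, so that it suffices to show that the colimit of the idempotent boolean algebras $B(C)$, as $C$ ranges over the commutative subrings of $\MnC$, is the trivial one-element boolean algebra.

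This last claim is the main obstacle, and it is exactly where the Kochen--Specker theorem enters; indeed it is the same fact that underlies Theorem~\ref{thm:partialbool}. A homomorphism from $\mathrm{colim}_{C} B(C)$ to the two-element algebra $2$ is precisely a compatible family of homomorphisms $B(C) \to 2$, that is, a $\{0,1\}$-valuation on the idempotents of $\MnC$ which is multiplicative and complement-preserving within each commuting block. Restricting such a valuation to the self-adjoint idempotents, i.e.\ to the orthogonal projections, would yield a Kochen--Specker colouring, which for $n \geq 3$ does not exist. Hence there is no homomorphism $\mathrm{colim}_{C} B(C) \to 2$; since every nontrivial boolean algebra admits one, the colimit is trivial, and so $G(\MnC)$, being $\Idl$ of the trivial boolean algebra, is the trivial locale. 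The mediating morphism $F(\MnC) \to G(\MnC)$ into the trivial locale then forces $F(\MnC)$ to be trivial, and the first lemma extends this conclusion to all Kochen--Specker rings.
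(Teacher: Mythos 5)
Your proof is correct, but the way you establish triviality of $G(\MnC)$ is genuinely different from the paper's. The paper never computes $G(\MnC)$ directly: it interpolates three auxiliary functors $K, L, M \colon \Cat{Neumann}\op \to \Loc$ (the restriction of $G$ to von Neumann algebras, the limit of Pierce spectra over commutative von Neumann subalgebras, and the limit of Gelfand spectra over commutative von Neumann subalgebras), connects them by natural transformations $K \Rightarrow L \Rightarrow M$, and then quotes Theorem~\ref{thm:vonneumann} to make $M(\MnC)$ trivial; strict initiality of the trivial locale then propagates this back through $L$ and $K$ to $G$. You instead use the paper's corollary that $\Idl \colon \Cat{Bool}\op \to \Loc$ preserves limits to rewrite $G(\MnC)$ as $\Idl$ of the colimit in $\Cat{Bool}$ of the idempotent algebras $B(C)$, and you kill that colimit directly: a homomorphism to the two-element algebra would give a valuation on all idempotents of $\MnC$ that is a boolean homomorphism on each commutative subring, and since mutually orthogonal projections summing to $1$ commute and hence generate a commutative subring, its restriction to self-adjoint idempotents would be a Kochen--Specker colouring, impossible for $n \geq 3$. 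That restriction-to-projections step is the essential extra idea (the diagram of \emph{all} commutative subrings contains non-self-adjoint idempotents, which is presumably why the paper detours through von Neumann algebras, where this issue disappears), and it is sound. The trade-off: your route is more self-contained, needing only the Stone-duality corollary and the Kochen--Specker theorem rather than Theorem~\ref{thm:vonneumann} and the $K \Rightarrow L \Rightarrow M$ comparison; but the final step ``no homomorphism to $2$, hence trivial'' invokes the Boolean prime ideal theorem, so your argument is nonconstructive in the same way the paper's Zariski proof is, whereas the authors point out that the Pierce theorem (unlike the Zariski one) admits a constructive proof --- which their reduction to Theorem~\ref{thm:vonneumann} preserves.
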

\begin{proof}
  Define the functor $G$ as before: it first considers the diagram $\cC(R)$ of all commutative
  subrings of a ring $R$, and then takes the limit of their Pierce spectra. In addition, define three functors $K, L, M: \Cat{Neumann}\op \to \Loc$: $K$ is simply the composite of $G$ with the inclusion $\Cat{Neumann} \to \Cat{Ring}$, while $L$ on a von Neumann algebra $A$ takes the diagram $\cC(A)$ of commutative von Neumann subalgebras of $A$, and then takes the limit of their Peirce spectra. Finally, $M$ also takes the diagram $\cC(A)$ of commutative von Neumann subalgebras of $A$, but then takes the limit of their Gelfand spectra (which coincides with the Stone spectra on their projections). It is not hard to see that we have natural transformations $K \Rightarrow L$ and $L \Rightarrow M$. But since $M$ results in the trivial locale for $\MnC$ for $n \geq 3$ by Theorem \ref{thm:vonneumann}, the same must be true for $L$, $K$ and $G$. And from here the argument proceeds as before.
\end{proof}

For the Zariski spectrum we argue slightly differently (and
nonconstructively) by reducing the result to the one by Reyes. Let us
emphasize that this proof strategy also applies to the previous
theorems; but whereas they could also be proven constructively, the
Zariski spectrum functor $\Cat{cRing} \to \Loc$ does not preserve limits.

\begin{theorem}
  Any functor $\Cat{Ring}\op \to \Loc$ that assigns to each commutative ring its Zariski
  spectrum must be trivial on all Kochen--Specker rings.
\end{theorem}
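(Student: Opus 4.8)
The plan is to run the same terminal-functor argument as in the previous theorems, but to deduce triviality on $\MnC$ from Reyes' topological no-go theorem instead of from limit preservation (which now fails). First I would define $G\colon\Cat{Ring}\op\to\Loc$ exactly as before: $G(R)$ is the limit in $\Loc$ of the Zariski locales $\Spec(C)$ taken over the diagram $\cC(R)$ of commutative subrings of $R$. Since $\cC(C)$ has $C$ as a terminal object when $C$ is commutative, $G$ extends the Zariski locale on commutative rings, and the usual reasoning shows it is the terminal such functor; in particular every competitor $F$ comes equipped with a natural transformation $F\Rightarrow G$.

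Next I would pass to points. Let $\mathrm{pt}\colon\Loc\to\Top$ be the points functor, right adjoint to the open-sets functor. For a commutative ring $C$ the Zariski locale $\Spec(C)$ is coherent, hence spatial, so $\mathrm{pt}(\Spec(C))$ recovers the ordinary Zariski topological space. Consequently $\mathrm{pt}\circ G\colon\Cat{Ring}\op\to\Top$ is a functor that assigns to each commutative ring its Zariski spectrum, and Reyes' theorem applies to it, giving $\mathrm{pt}(G(\MnC))=\emptyset$ for $n\geq3$. Thus $G(\MnC)$ is a locale \emph{without points}.

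The crux is to promote ``no points'' to ``trivial,'' and this is where the argument becomes nonconstructive. For a general functor $F$ one cannot do this, since pointless nontrivial locales exist; the reason for routing everything through $G$ is that $G(\MnC)$ is spatial. Indeed, its defining diagram consists of coherent locales $\Spec(C)$ and coherent morphisms --- the spectrum of a ring map lies in the image of $\Idl$ and is therefore coherent --- so by the proposition on limits of coherent locales, $G(\MnC)$ is coherent and hence spatial. A spatial locale with empty space of points is trivial, so $G(\MnC)$ is trivial. Finality of $G$ then yields a morphism $F(\MnC)\to G(\MnC)$ into the (strict) initial locale for any $F$ extending the Zariski locale, forcing $F(\MnC)$ to be trivial for all $n\geq3$, and the preceding lemma upgrades this to all Kochen--Specker rings. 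The main obstacle is precisely the pointless-to-trivial step, which rests on the spatiality of coherent locales --- equivalently, on the prime ideal theorem --- explaining the ``nonconstructive'' caveat.
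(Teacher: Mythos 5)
Your proof is correct and takes essentially the same route as the paper's: define the terminal extension $G$ as a limit of Zariski locales, note that a limit of coherent locales along coherent morphisms is coherent and hence spatial by the prime ideal theorem, and then invoke Reyes' topological theorem to conclude triviality, finishing via terminality of $G$ and the Kochen--Specker lemma. The only difference is expository: you make explicit the points-functor mediation and the ``spatial plus pointless implies trivial'' step, which the paper compresses into the phrase ``follows from the work of Reyes.''
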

\begin{proof}
  Define the functor $G$ as before, and note that we take a limit of a diagram of coherent locales
  and coherent morphisms. As such a limit is coherent and coherent locales are spatial (by the
  prime ideal theorem), its triviality on matrix algebras $\MnC$ for $n \geq 3$ follows from the
  work of Reyes.
\end{proof}

\section{Discussion}

Our main results prove an obstruction to direct functorial
extensions of various spectra, taking values in locales. There is
also no hope for values in categories of which compact
completely regular locales are a subcategory that is closed under limits.
\begin{itemize}
\item The functor $\mathrm{Sh} \colon \Cat{Loc} \to \Cat{Topos}$
  that takes sheaves preserves
  limits~\cite[C.1.4.8]{johnstone:elephant}, so the 
  obstruction for $\Loc$ also holds for $\Cat{Topos}$.
\item The forgetful functor $\Cat{RingedTopos} \to \Cat{Topos}$
  reflects initial objects, so replacing $\Loc$ by the category of
  ringed toposes does not help either. The same holds for ringed
  spaces, either topological or localic.
\item The forgetful functor from the category $\Cat{Scheme}$ of
  schemes to the category $\Top$ of topological spaces reflects initial objects, so
  there is no use in replacing locales by schemes.
\item The category of compact (completely) regular locales is 
  closed under limits in the opposite of the category
  $\Cat{Quantale}$ of unital quantales and their
  homomorphisms~\cite[4.4]{krumletal:max}. Using this adapted version of
  Proposition~\ref{prop:krlocclosedunderlimits}, the proof of
  Theorem~\ref{thm:cstar} also obstructs functors from taking
  values in quantales. Similarly, involutive quantales are out of the question.
\end{itemize}
On the other hand, one could read our main results positively. They
guide the search for a `geometric' spectrum of noncommutative algebras
in two ways. We discuss the Gelfand spectrum here, but the underlying
ways to overcome the obstacle of our main results hold in
general. First, the obstruction can be circumvented by not assigning
the Gelfand spectrum to a commutative C*-algebra \emph{directly}. 
\begin{itemize}
\item Assigning the quantale of closed linear subspaces to a
  C*-algebra encodes the Gelfand spectrum of commutative C*-algebras
  indirectly, and does indeed give a functor~\cite{krumletal:max}. 
\item The Bohrification construction~\cite{heunenlandsmanspitters:bohrification,
  vdbergheunen:colim}, which inspired most of the current work, is not
  a direct extension of the Gelfand spectrum and hence escapes the
  hypothesis of our main theorem. 
\end{itemize}
Secondly, there \emph{is} scope for a functorial spectrum taking values in 
categories with traditional geometric objects but different morphisms.
\begin{itemize}
\item One could consider different morphisms between rings/algebras, and
  hence take a different view of these objects, to obtain a functorial
  spectrum resembling a space (see also the discussion in
  \cite[p15]{reyes:onextensions}). 
\item For example, there \emph{is} an interesting functor $F$ from
  $\Cstar$ to the category of \emph{quantum frames}, that for
  commutative C*-algebras comes down to the Gelfand
  spectrum~\cite{rosicky:quantumframes}. This does not contradict 
  the above results, because there is no forgetful functor from quantum frames
  to either quantales or locales: indeed, $F(\MnC[3])$ consists of
  closed right ideals of $\MnC[3]$, and therefore is not trivial.
\end{itemize}

\bibliographystyle{eptcs}
\bibliography{localicnogo}

\end{document}